\def\PP{\mathscr{S}}
\def\EE{\mathscr{E}}
\def\FF{\mathcal{F}}
\def\OO{\mathscr{O}}
\def\CC{\mathcal{C}}
\def\Co{\mathfrak{C}}
\def\cS{\mathcal{S}}
\def\TT{\mathcal{T}}
\def\TM{\mathscr{P}}
\def\LL{{L}}
\def\AA{\mathcal{A}}
\def\zero{\widehat{0}}
\def\uno{\widehat{1}}
\def\supp{\operatorname{supp}}
\def\cl{\operatorname{cl}}
\def\ldot{\lessdot}
\def\covers{\gtrdot}
\def\op{\textup{op}}
\renewcommand\ell{l}
\newtheorem{thm}{Theorem}
\newtheorem{cor}{Corollary}
\newtheorem{lem}{Lemma}
\theoremstyle{definition}
\newtheorem{df}{Definition}
\newtheorem{qu}{Question}
\theoremstyle{remark}
\title[Modular Elimination in Matroids and Oriented Matroids]{Modular Elimination in Matroids\\ and Oriented Matroids}
\author{Emanuele Delucchi}
\address{Department of Mathematical Sciences, SUNY Binghamton, Binghamton NY 13902-6000, USA}
\email{delucchi@math.binghamton.edu}
\date{\today}
\keywords{Matroids; oriented matroids; modular pair; posets; lattices; infinite matroids. }
\begin{document}

\maketitle

\begin{abstract}
We introduce a new axiomatization of matroid theory that requires the elimination property only among modular pairs of circuits, and we present a cryptomorphic phrasing thereof in terms of Crapo's axioms for flats.
 
This new point of view leads to a corresponding strengthening of the circuit axioms for oriented matroids.
\end{abstract}

\section{Introduction}
In this paper we take a close look at the definition of a matroid in terms of its set of circuits. We prove that the most usual form of these axioms, requiring the {\em elimination property} to hold for each pair of circuits, is redundant. Our result shows that it is enough to require this property to hold for some special pairs of circuits: {\em modular pairs}. The main idea is to use the fact that modularity is defined in any lattice, not necessarily geometric. To make things precise, let us begin by defining the elimination property and stating the circuit axioms for matroids. For an introduction to matroid theory we point to Oxley's book \cite{Oxley}.

Given a finite set $E$, a collection  $\CC$ of subsets of $E$ and $C_1,C_2\in\CC$, we define the {\em elimination property} between $C_1$ and $C_2$ as   
\[\EE (C_1,C_2,\CC):
\textrm{ for all } e\in C_1\cap C_2 \textrm{ there is } C_3\in\CC\textrm{ with }C_3\subseteq (C_1\cup C_2)\setminus \{e\}.
\]

\begin{df}\label{def:M}
Let $E$ be a finite set. A collection $\CC$ of incomparable nonempty subsets of $E$ is the {\em set of circuits of a matroid} on the ground set $E$ if $\EE(C_1,C_2,\CC)$ holds for all $C_1,C_2\in\CC$.
\end{df}

One important feature of matroid theory is the availability of many cryptomorphic axiomatizations, different in spirit but equivalent in substance. For example, notice that the set of all unions of circuits, partially ordered by inclusion, is an inverted geometric lattice (see \cite[Chapter 1.7]{Oxley}). In fact, the structure of this lattice does encode the full matroid structure.

Given the set $\CC$ of circuits of a matroid, let us call a pair of circuits $A,B\in \CC$ a {\em modular pair} if $A\vee B$ has rank $2$ in the associated lattice (see for instance \cite{fuoriorario}). The key observation is then that for this definition to make sense all we need to know about the family $\CC$ is that its members are incomparable (compare Definition \ref{def:U}). \\

We now consider oriented matroids;
a general introductory reference is \cite{BLSWZ}. We consider not only subsets of $E$, but {\em signed subsets}, i.e., functions $X:E\to \{-1,0,+1\}$ representing the ``signature'' of the set given by the support of $X$. Notice that the set of signs has a natural $\mathbb Z_2$ action (switching sign). 
The {\em support} of a signed set $X$ is $\supp(X):=\{e\in E\mid X(e)\neq 0\}$, and we will call a collection $\Co\subseteq\{-1,0,+1\}$ {\em simple} if $\supp(Y)=\supp(X)$ implies $X=\pm Y$ for all $X,Y\in \Co$. 
In order to give a definition that exhibits the similarity with the previous one for matroids, let us define a ``oriented elimination'' property for any $\Co\subseteq\{- 1, 0, +1\}^E$ and any $X,Y\in \Co$.
\[
\begin{array}{rl}
\OO\EE (X,Y,\Co):&
\!\!\textrm{for all } e,f \textrm{ with } X(e)=-Y(e)\neq 0, X(f)\neq Y(f)\\ &\!\!\textrm{there is } Z\in\Co \textrm{ with } Z(e)=0, Z(f)\neq 0,\\
&\!\!\textrm{and } Z(g)\in\{0,X(g),Y(g)\} \textrm{ for all }g\in E.
\end{array}
\]
 
Then, one definition of oriented matroids is the following.
\begin{df}[see \cite{BLSWZ}]\label{OM:def} A $\mathbb Z_2$-symmetric, simple  collection $\Co\subseteq\{-1,0,+1\}^E$ 
is the set of signed circuits of an oriented matroid if
\begin{itemize}
\item[(1)] the collection $\CC:=\{\supp(X)\mid X\in \Co\}$ is the set of circuits of a matroid on $E$, and
\item[(2)] $\OO\EE(X,Y,\Co)$ holds for all $X,Y\in \Co$ such that $\supp(X),\supp(Y)$ are a modular pair in $\CC$.
\end{itemize}
\end{df}

The previous definition is usually presented as an interesting ``curiosum'', whereas the standard definition requires $\OO\EE(X,Y,\Co)$ to hold for every pair of elements $X,Y\in\Co$. Recent work by Laura Anderson and the author~\cite{AnDe} led to the observation that for linear dependencies in complex spaces an analogous ``oriented elimination'' with ocmplex signs (i.e.,\ phases of complex numbers) - cannot be expected to hold for all pairs of circuits but only for modular pairs (see \cite[Sections 3 and 6]{AnDe} for details). This pointed to the possibility that (unoriented) elimination among modular pairs of circuits could be a defining property for matroids.\\


Our Theorem \ref{main} shows that indeed, given a collection of incomparable nonempty subsets of a finite ground set, requiring the elimination property for modular pairs  (defined as in Definition \ref{def:U}) of elements of this collection is enough to ensure that the collection satisfies the circuit axioms for matroids. As a corollary, we can remove condition (1) from Definition \ref{OM:def}. With Theorem \ref{thm:newcrapo} we then also describe a cryptomorphic weakening of Crapo's axioms for flats.

\subsection*{Acknowledgments} The author would like to thank Thomas Zaslavsky and Fernando Guzm\`an for helpful discussions during the preparation of the manuscript. The idea for the paper arose during the joint work with Laura Anderson on \cite{AnDe}.

\section{Main result}

\noindent {\em Notation and basics.} As a general reference on the combinatorics of posets and lattices we refer to \cite[Chapter 3]{Stanley}. Here let us only recall that a {\em chain} $J$ in a poset $(P,\leq)$ is any totally ordered subset of $P$; the {\em length} of the chain $J$ is then $\ell(J):=\vert J \vert -1$. Given $x\in P$ we write $P_{\geq x} =\{x'\in P\mid x'\geq x\}$ and $P_{\leq x}=\{x'\in P\mid x'\leq x\}$. The {\em length} of $P$ is $\ell(P):=\max\{\ell(J) \mid J \textrm{ a chain of }P\}$, and for $x\in P$ write $\ell(x):=\ell(P_{\leq x})$.  

Given two elements $x,y\in P$, we say that {\em $x$ covers $y$}, written $x \ldot y$, if $x\leq y$ and $\vert P_{\geq x} \cap P_{\leq y}\vert =2$.

 If for any $x,y\in P$ the poset $P_{\geq x}\cap P_{\geq y}$ has a unique minimal element, this element is denoted $x\vee y$ and called the {\em meet} of $x$ and $y$. Analogously we call $x\wedge y$, or {\em join} of $x$ and $y$, the unique maximal element of $P_{\leq x}\cap P_{\leq y}$, if it exists.  The poset $P$ is called a {\em lattice} if meet and join are defined for every pair of elements of $P$. In particular, every finite lattice has a unique minimal element, called $\zero$, and a unique maximal element, called $\uno$. In any poset with a unique minimal element $\zero$, the elements $a$ with $\zero\ldot a$ are called {\em atoms}.

\begin{df}\label{def:U} Let $L$ be a lattice. The {\em atoms} of $L$ are the elements that cover $\zero$ in $L$. The lattice $L$ is called {\em atomic} if every $x\in L$ is $x=\bigvee A$ for some set $A$ of atoms of $L$. We say that two atoms $a,b$ of $L$ form a {\em modular pair} if $\ell(L_{\leq a\vee b})=2$. 

Given any family $\cS$ of subsets of a set $E$, consider the set 
\[U(\cS):=\{\bigcup \TT \mid \TT\subseteq \cS\}\]
partially ordered by inclusion - so, for $A,B\in U(\cS)$, $A\leq B$ if $A\subseteq B$. 

If the members of $\cS$ are incomparable, then $U(\cS)$ is a lattice with $\zero=\emptyset$ where join and meet of any two elements $A,B\in U(\cS)$ are defined by $A\vee B:= A\cup B$ and $A\wedge B:=\bigcup\{S\in \cS \mid S\subseteq A\cap B\}$.  This lattice is atomic by definition. We will say that two members of $\cS$ are a {\em modular pair} if they are a modular pair in $U(\cS)$.
\end{df}

\begin{thm}\label{main}
Let $\CC$ be a collection of incomparable finite subsets of a set $E$.
If $\EE(A,B,\CC)$ for all modular pairs $A,B\in \CC$, then $\EE(A,B,\CC)$ for all pairs $A,B\in \CC$.
\end{thm}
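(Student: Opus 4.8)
The plan is to reduce the theorem to a single combinatorial lemma about the lattices $U(\cdot)$ and then deduce it in a few lines. The lemma is: \emph{if $\SS$ is a family of incomparable sets with $|\SS|\ge 2$ and $U(\SS)$ is finite, then some two distinct members of $\SS$ form a modular pair} --- i.e.\ modularity is never a vacuous restriction: any two incomparable sets already sit inside a modular pair (possibly a third member). This lemma is the only step with any real content, and I expect it to be the main obstacle; everything else is bookkeeping, though it needs care since $U(\CC)$ is in general neither graded nor finite.

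For the deduction, fix $A,B\in\CC$ (which we may take distinct) and $e\in A\cap B$; if $A\cap B=\emptyset$ there is nothing to prove. I would pass to $\CC'':=\{C\in\CC\mid C\subseteq A\cup B\}$. Since the members of $\CC$ are finite, those of $\CC''$ are all subsets of the finite set $A\cup B$, so $\CC''$ is finite and $U(\CC'')$ is a finite lattice; moreover $\CC''$ consists of incomparable sets and contains $A$ and $B$, so $|\CC''|\ge 2$. The crucial observation is that $\CC''$ is closed downward: any member of $\CC$ contained in $A\cup B$ already lies in $\CC''$, and therefore for $P,Q\in\CC''$ the interval $[\zero,P\cup Q]$ is literally the same poset whether formed inside $U(\CC'')$ or inside $U(\CC)$; in particular a modular pair of $\CC''$ is a modular pair of $\CC$. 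Applying the lemma to $\CC''$ produces a modular pair $P,Q\in\CC''$, so by hypothesis $\EE(P,Q,\CC)$ holds. If $e\notin P$, then $P$ is itself a member of $\CC$ contained in $(A\cup B)\setminus\{e\}$ and we are done; similarly if $e\notin Q$; otherwise $e\in P\cap Q$, and $\EE(P,Q,\CC)$ yields $C_3\in\CC$ with $C_3\subseteq (P\cup Q)\setminus\{e\}\subseteq (A\cup B)\setminus\{e\}$. In every case $\EE(A,B,\CC)$ holds at $e$, and $e\in A\cap B$ was arbitrary.

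To prove the lemma I would argue by extremality. Among all pairs of distinct members of $\SS$, choose $P,Q$ minimizing $\ell(P\cup Q)$ in $U(\SS)$. Since $P\neq Q$ are incomparable, $\zero\subsetneq P\subsetneq P\cup Q$ is a chain of length $2$, so $\ell(P\cup Q)\ge 2$; it remains to rule out $\ell(P\cup Q)\ge 3$. If that held, there would be a chain $\zero=z_0\subsetneq z_1\subsetneq z_2\subsetneq z_3\subseteq P\cup Q$ in $U(\SS)$, whence $z_2\subsetneq P\cup Q$ and $\ell(z_2)\ge 2$. Writing $z_2=\bigcup\TT$ with $\TT\subseteq\SS$, the set $\TT$ must contain two distinct members $P_1\neq P_2$: otherwise $z_2$ would be $\zero$ or a single member of $\SS$ (an atom of $U(\SS)$), contradicting $\ell(z_2)\ge 2$. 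Then $P_1\cup P_2\subseteq z_2\subsetneq P\cup Q$, and since a chain ending at an element extends to one ending at any strictly larger element, $\ell$ is strictly monotone; hence $\ell(P_1\cup P_2)\le\ell(z_2)<\ell(P\cup Q)$, contradicting the minimality of $(P,Q)$. Therefore $\ell(P\cup Q)=2$, i.e.\ $P,Q$ form a modular pair, which completes the lemma and hence the proof.
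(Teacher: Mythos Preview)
Your argument is correct and follows essentially the same idea as the paper's proof: both locate, below $A\cup B$, a modular pair $P,Q$ and then dispose of the three cases $e\notin P$, $e\notin Q$, $e\in P\cap Q$ exactly as you do. The only difference is packaging: the paper runs a direct induction on $\ell(A\cup B)$ (at each step picking an intermediate $Y\subsetneq A\cup B$ and two atoms below $Y$), whereas you first restrict to the finite subfamily $\CC''=\{C\in\CC: C\subseteq A\cup B\}$ and then isolate the ``there exists a modular pair'' step as a standalone extremality lemma --- which is the paper's induction unwound.
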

\begin{proof} 
Take $A, B\in \CC$ with $A\neq B$,  $e\in A\cup B$ and let $Z:= A\cup B = A\vee B$. We want to show that a $C\in \CC$ exists with $ e\not\in C\subseteq Z $. The finiteness requirement on the cardinality of the elements of $\CC$ ensures that $\ell(X)<\infty$ for all $X\in U(\CC)$. We will proceed by induction on $\ell(Z)$.

 If $\ell(Z)=2$ then $A,B$ is a modular pair and we are done. Suppose now $\ell(Z)=n>2$ and let $J$ be a chain of maximal cardinality in $U(\CC)_{\leq Z}$. The chain $J$ contains exactly one element $A'\in\CC$ and at least an element $Y$ with $A'\lneq Y \lneq Z$. If $e\not\in A'$ we are done with $C:=A'$. Else, since $U(\CC)$ is atomic, there is $B'\in\CC$ with $A'\vee B' \leq Y$. Again, if $e\not\in B'$ then $C:=B'$ does it; otherwise $e\in A'\cap B'$ and we may apply the inductive hypothesis to the pair $A',B'$ (because $Y<Z$ implies $\ell(Y)<\ell(Z)$), obtaining $C$ as desired.
\end{proof}


Restricting $E$ to be a finite set, Theorem \ref{main} gives the desired result.
\begin{cor}\label{M:thm}
A collection $\CC$ of incomparable subsets of a finite set $E$ is the set of circuits of a matroid on $E$ if $\EE(A,B,\CC)$ for all modular pairs $A,B\in\CC$.
\end{cor}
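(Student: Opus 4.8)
The plan is to obtain Corollary \ref{M:thm} as an immediate consequence of Theorem \ref{main} together with the circuit axiomatization of Definition \ref{def:M}. The key remark is that when the ground set $E$ is finite, every subset of $E$ is finite; in particular every member of $\CC$ is a finite subset of $E$, so $\CC$ satisfies the hypotheses of Theorem \ref{main} as soon as $\EE(A,B,\CC)$ holds for all modular pairs $A,B\in\CC$.

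Concretely, I would argue as follows. Assume $\CC$ is a collection of incomparable (nonempty) subsets of the finite set $E$ with $\EE(A,B,\CC)$ for every modular pair $A,B\in\CC$. Since the members of $\CC$ are finite, Theorem \ref{main} applies and yields $\EE(C_1,C_2,\CC)$ for \emph{all} pairs $C_1,C_2\in\CC$. But this is precisely the condition in Definition \ref{def:M}: a collection of incomparable nonempty subsets of a finite ground set $E$ for which $\EE$ holds between every pair of members is, by definition, the set of circuits of a matroid on $E$. Hence $\CC$ is the set of circuits of a matroid on $E$, as claimed.

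There is no genuine obstacle in this step — all the work is in Theorem \ref{main} — so the only points to watch are bookkeeping ones. First, one should keep in mind the convention, carried over from Definition \ref{def:M}, that the members of $\CC$ are nonempty (otherwise $\emptyset\in\CC$ would force $\CC=\{\emptyset\}$, which vacuously satisfies the modular-elimination hypothesis but is not the set of circuits of a matroid). Second, one should note explicitly that finiteness of $E$ is what makes the ``finite subsets'' hypothesis of Theorem \ref{main} automatic, so that the theorem can be applied verbatim; this is exactly the content of the remark ``Restricting $E$ to be a finite set'' preceding the statement.
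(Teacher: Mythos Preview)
Your proposal is correct and matches the paper's approach exactly: the paper offers no separate proof of Corollary~\ref{M:thm}, only the sentence ``Restricting $E$ to be a finite set, Theorem \ref{main} gives the desired result,'' which is precisely the reduction you spell out. Your bookkeeping remarks about nonemptiness and the finiteness hypothesis are appropriate caveats but do not alter the argument.
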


As a straightforward consequence we have a corresponding strengthening
of the axiomatics of oriented matroids given in Definition \ref{OM:def}.

\begin{cor}\label{OM:thm} A $\mathbb
  Z_2$-symmetric, simple
  collection $\Co$ of elements of $\{-1,0,+1\}^E$ with incomparable support   
is the set of signed circuits of an oriented matroid if and only if 
$\OO\EE(X,Y,\Co)$ holds for all $X,Y\in \Co$ such that
$\supp(X),\supp(Y)$ are a modular pair in the set of supports of
elements of $\Co$.
\end{cor}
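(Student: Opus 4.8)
The plan is to deduce Corollary~\ref{OM:thm} from Corollary~\ref{M:thm} together with Definition~\ref{OM:def}. The forward (``only if'') implication needs no work: if $\Co$ is the set of signed circuits of an oriented matroid, then condition~(2) of Definition~\ref{OM:def} is exactly the statement that $\OO\EE(X,Y,\Co)$ holds whenever $\supp(X),\supp(Y)$ are a modular pair. So the content is the converse, where from the hypothesis on modular pairs I must recover the full Definition~\ref{OM:def}; equivalently, setting $\CC:=\{\supp(X)\mid X\in\Co\}$, I must show that $\CC$ is the set of circuits of a matroid on $E$, which is condition~(1).

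First I would note that by hypothesis the members of $\CC$ are incomparable, so $U(\CC)$ is the atomic lattice of Definition~\ref{def:U}, its atoms are precisely the members of $\CC$, and ``modular pair in $\CC$'' makes sense. By Corollary~\ref{M:thm} it is then enough to verify $\EE(A,B,\CC)$ for every modular pair $A,B\in\CC$; so I fix such a pair, write $A=\supp(X)$ and $B=\supp(Y)$, fix $e\in A\cap B$, and look for $C\in\CC$ with $e\notin C\subseteq A\cup B$. The key move is to realize this as an output of $\OO\EE$. Because a modular pair consists of two \emph{distinct} atoms, $A\neq B$, and since $\Co$ is simple this forces $X\neq\pm Y$. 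Using $\mathbb Z_2$-invariance I replace $Y$ by $-Y$ if necessary so that $X(e)=-Y(e)\neq 0$ --- this alters neither $\supp(Y)=B$ nor the relation $X\neq\pm Y$ --- and then, since $X\neq Y$, I pick some $f$ with $X(f)\neq Y(f)$, observing $f\in A\cup B$. Applying $\OO\EE(X,Y,\Co)$ to this $e,f$ produces $Z\in\Co$ with $Z(e)=0$, $Z(f)\neq 0$, and $Z(g)\in\{0,X(g),Y(g)\}$ for all $g\in E$; then $C:=\supp(Z)$ is a nonempty member of $\CC$ with $e\notin C\subseteq A\cup B$, which is $\EE(A,B,\CC)$ for this $e$. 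As $e\in A\cap B$ was arbitrary, $\EE(A,B,\CC)$ holds for all modular pairs, so Corollary~\ref{M:thm} yields condition~(1) and Definition~\ref{OM:def} then identifies $\Co$ as the set of signed circuits of an oriented matroid.

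I expect the only delicate point to be guaranteeing that the auxiliary coordinate $f$ demanded by $\OO\EE$ actually exists after the sign normalization --- i.e.\ that $X\neq Y$ --- and this is precisely what simplicity of $\Co$ provides, via $A\neq B$ forcing $X\neq\pm Y$. One should also tacitly set aside the degenerate possibility that the zero vector $\underline 0$ belongs to $\Co$, in which case incomparability of supports would force $\Co=\{\underline 0\}$, outside the intended scope of the statement. Apart from that, the proof is routine bookkeeping with supports and signs, with all the genuine combinatorics already carried out in Theorem~\ref{main}; in particular no further induction is needed here.
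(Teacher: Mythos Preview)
Your proof is correct and follows the same approach as the paper: verify condition~(1) of Definition~\ref{OM:def} by applying Corollary~\ref{M:thm} to $\CC=\{\supp(X)\mid X\in\Co\}$, after observing that $\OO\EE(X,Y,\Co)$ yields $\EE(\supp X,\supp Y,\CC)$. The paper records this last implication in a single parenthetical, whereas you spell out the sign-normalization step (replacing $Y$ by $-Y$ to force $X(e)=-Y(e)$) and the existence of the auxiliary coordinate $f$; one minor remark is that $A\neq B$ already gives $\supp X\neq\supp Y$, hence $X\neq\pm Y$, so simplicity is not actually needed at that particular step.
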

\begin{proof} From Corollary \ref{M:thm} we know that, under the hypotheses of the theorem, $\CC:=\{\supp(X)\mid X\in\Co\}$ is the set of circuits of a matroid (indeed, $\OO\EE(X,Y,\Co)$ implies $\EE(\supp{X},\supp{Y},\CC)$). 
%
\end{proof}

In its full generality, Theorem \ref{main} implies the corresponding strengthening of the axioms for  {\em finitary matroids} (also called {\em independence spaces} - see \cite[Chapter 20]{Welsh} for an overview, \cite{InfOxley} for an extended account).

\begin{cor} A family $\CC$ of incomparable finite subsets of a (possibly infinite) set $E$ is the family of circuits of a finitary matroid if and only if $\EE(A,B,\CC)$ holds for all modular pairs $A,B\in\CC$.
\end{cor}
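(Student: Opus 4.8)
The plan is to derive this statement directly from Theorem \ref{main} together with the standard circuit axiomatization of finitary matroids, so that essentially no new argument is needed. The ``only if'' direction is immediate: if $\CC$ is the family of circuits of a finitary matroid, then circuit elimination $\EE(A,B,\CC)$ holds for \emph{all} pairs $A,B\in\CC$, in particular for the modular ones. For the ``if'' direction I would first observe that, since every member of $\CC$ is finite, every element of $U(\CC)$ has finite length, so $U(\CC)$ is a well-defined atomic lattice in which the notion of a modular pair makes sense (this is exactly what is recorded in Definition \ref{def:U} and used at the start of the proof of Theorem \ref{main}); hence the hypothesis ``$\EE(A,B,\CC)$ for all modular pairs $A,B$'' is meaningful with no finiteness assumption on $E$.

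Next I would invoke Theorem \ref{main} verbatim. It is stated for a collection $\CC$ of incomparable \emph{finite} subsets of an \emph{arbitrary} set $E$, which is precisely the present situation, and its conclusion is that $\EE(A,B,\CC)$ holds for all pairs $A,B\in\CC$. Finally I would appeal to the fact (see \cite[Chapter 20]{Welsh}, or \cite{InfOxley} for a detailed treatment) that a family of nonempty, pairwise incomparable, finite subsets of $E$ satisfying unrestricted circuit elimination is exactly the family of circuits of a finitary matroid on $E$. Chaining these three observations together gives the corollary.

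The only point that requires genuine care — and the closest thing here to an obstacle — is bookkeeping rather than mathematics: one must check that the circuit axiomatization being cited for finitary matroids has hypotheses that match exactly what we have, namely (nonemptiness) $+$ (pairwise incomparability) $+$ (finiteness of every circuit) $+$ (unrestricted circuit elimination), with nothing else hidden; in particular, if one wishes to allow $\emptyset\in\CC$ in the statement, note that incomparability then forces $\CC=\{\emptyset\}$, which is excluded (or treated as the degenerate case) by the finitary matroid axioms, so one may harmlessly assume $\emptyset\notin\CC$. Once the matching axiomatization is pinned down, the proof collapses to a one-line application of Theorem \ref{main}; indeed, obtaining this corollary is precisely the reason Theorem \ref{main} was phrased for finite circuits over a possibly infinite ground set rather than only for finite $E$.
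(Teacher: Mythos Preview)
Your proposal is correct and matches the paper's own treatment: the paper gives no separate proof of this corollary, simply noting that ``in its full generality, Theorem \ref{main} implies'' it, which is exactly the one-line application you describe (modular elimination $\Rightarrow$ full elimination by Theorem \ref{main}, then cite the standard circuit axioms for finitary matroids). The only caveat is your parenthetical remark that ``every element of $U(\CC)$ has finite length'': this is not literally true when $\CC$ is infinite (infinite unions of circuits can sit atop infinite chains), but it is irrelevant since Theorem \ref{main} only needs $\ell(A\vee B)<\infty$ for two circuits $A,B$, which does hold because $A\cup B$ is finite --- and in fact the paper's own proof of Theorem \ref{main} contains the same harmless overstatement.
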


\section{Flats and geometric lattices}\label{sec:geolat}

A matroid given by its set of circuits $\CC$ as in Definition \ref{def:M} gives rise to a {\em closure operator} on its ground set $E$ 
  $$
    \cl : \TM(E)\to \TM(E),\quad A\mapsto \cl(A):=A\cup\{e\in E \mid e\in C\subseteq A\cup\{e\}\textrm{ for a }C\in \CC\}
  $$ 

\noindent where $\TM(E)$ is the power set of $E$ (compare \cite[Proposition 1.4.10]{Oxley}).

A set $X\subseteq E$ is {\em closed} if $\cl(X)=X$. Closed sets of matroids are usually called {\em flats}. The collection of flats of a matroid, partially ordered by inclusion, is a lattice.  After a preparatory definition we will state a characterization, due to Crapo \cite{Crapo}, of the posets that arise as lattices of flats of a matroid.

\begin{df}
Consider a finite poset $P$ and let $\AA$ be its set of atoms. Given $x\in P$, write 
$\AA_x:=\AA\cap P_{\leq x}$. We say that $x$ satisfies Crapo's {\em separation property}  (essentially axiom $\beta$ in \cite{Crapo}) if the property
 $$ 
   \PP(x,P):= \textrm{ }  \{\AA_{x'}\setminus \AA_{x}\}_{x'\covers x} \textrm{ is a partition of } \AA\setminus \AA_{x}
 $$
is satisfied.
\end{df}

\begin{lem}[Crapo's axioms for flats, see \cite{Crapo} or p.\ 35 of \cite{Oxley}]
\label{thm:crapo}
Let $E$ be a finite set. An intersection-closed family $\FF$ of subsets of $E$ with $E \in\FF$, partially ordered by inclusion, is the set of flats of a matroid on the ground set $E$ if and only if $\PP(X,\FF)$ holds for all $X\in \FF$.
\end{lem}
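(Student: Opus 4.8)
The plan is to prove the biconditional by treating the two implications separately, organizing everything around the closure operator attached to $\FF$: for an intersection-closed family $\FF$ with $E\in\FF$, set $\cl(S):=\bigcap\{F\in\FF\mid S\subseteq F\}$. One checks at once that $\cl$ is a closure operator (the three standard closure axioms are immediate) and that its closed sets are precisely the members of $\FF$: every $F\in\FF$ equals $\cl(F)$, and conversely every $\cl(S)$ is an intersection of members of $\FF$, hence lies in $\FF$. Since matroids are cryptomorphic to closure operators satisfying the exchange property $y\in\cl(S\cup x)\setminus\cl(S)\Rightarrow x\in\cl(S\cup y)$ (compare~\cite{Oxley}), and since $\cl(S\cup x)=\cl(\cl(S)\cup x)$ lets one reduce exchange to the case where $S$ is a flat, the lemma amounts to: $\PP(X,\FF)$ holds for every $X\in\FF$ if and only if, for every flat $X$ and $x\in E\setminus X$, one has $x\in\cl(X\cup y)$ whenever $y\in\cl(X\cup x)\setminus X$. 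Throughout I would read $\PP(X,\FF)$ in its equivalent ground-set form: the upper covers $X'\covers X$ partition $E\setminus X$ via $X'\mapsto X'\setminus X$, the atoms in $\AA_{X'}\setminus\AA_X$ being a set of representatives for the parallel classes in each block.

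For the direction ``flats of a matroid $\Rightarrow\PP$'', I would assume $\FF$ is the flat lattice of a matroid with rank function $r$ and use two standard facts. First, for a flat $F$ and $e\in E\setminus F$ the flat $\cl(F\cup e)$ is an upper cover of $F$ --- equivalently $r(\cl(F\cup e))=r(F)+1$, and this is where semimodularity of the flat lattice (submodularity of $r$) enters --- and it is the unique upper cover of $F$ containing $e$, since any flat containing $F\cup e$ contains $\cl(F\cup e)$. Second, consequently every upper cover $F'$ of $F$ has $F'\setminus F\neq\emptyset$ and coincides with $\cl(F\cup e)$ for each $e\in F'\setminus F$. Together these say that $\{F'\setminus F\}_{F'\covers F}$ partitions $E\setminus F$, which is $\PP(F,\FF)$.

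For the converse I would assume $\PP(X,\FF)$ for all $X$ and verify exchange directly. Fix a flat $X$ and $x\in E\setminus X$. By the separation property there is a unique upper cover $X'$ of $X$ with $x\in X'$; since $X'$ is a flat containing $X\cup x$ we get $\cl(X\cup x)\subseteq X'$, and as $\cl(X\cup x)$ properly contains $X$ while $X'$ covers $X$ this forces $\cl(X\cup x)=X'$. Hence $\cl(X\cup x)$ is always an upper cover of $X$, namely the unique one through $x$. Exchange is now immediate: if $y\in\cl(X\cup x)\setminus X$ then $\cl(X\cup x)$ is an upper cover of $X$ containing $y$, so by uniqueness it equals $\cl(X\cup y)$, which therefore contains $x$; by the reduction above, $\FF$ is then the lattice of flats of a matroid on $E$.

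The step I expect to be the real obstacle is the bookkeeping concealed in the ``equivalent ground-set form'' of $\PP$ used in the converse. When $\FF$ is not simple, a ground-set element $x$ may lie in no atom, entering a flat only through joins of atoms, so from the atom-phrasing of $\PP(X,\FF)$ one must still produce an upper cover of $X$ through the actual element $x$. I would try to bridge this by induction on $\ell(\cl(X\cup x))-\ell(X)$: when this quantity exceeds one, choose an atom $a\leq\cl(X\cup x)$ with $a\not\leq X$, pass to the upper cover $X_1:=X\vee a$ supplied by $\PP(X,\FF)$, note that $\cl(X_1\cup x)=\cl(X\cup x)$ because $a\subseteq\cl(X\cup x)$, and recurse on the strictly smaller height difference. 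Getting this induction to terminate at height difference one --- that is, ruling out that adjoining a single element skips a level of the lattice --- is exactly the point where the full strength of the separation property has to be used, and it mirrors the semimodularity invoked in the forward direction; this ``rank goes up by exactly one'' phenomenon is the crux on both sides, with everything else routine.
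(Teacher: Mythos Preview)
The paper does not prove this lemma; it is quoted as a known result with references to Crapo and to Oxley, and no argument is given. So there is no ``paper's proof'' to compare against.

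Your approach---pass to the closure operator $\cl(S)=\bigcap\{F\in\FF:S\subseteq F\}$ and verify the exchange property---is the standard one (essentially what Oxley does). With $\PP(X,\FF)$ read in the ground-set form ``$\{X'\setminus X\}_{X'\covers X}$ partitions $E\setminus X$'' (Oxley's axiom (F3)), your converse argument is clean and complete: the unique upper cover of $X$ through $x$ is forced to equal $\cl(X\cup x)$, and exchange drops out immediately, with no induction needed. The forward direction is also fine.

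The difficulty you isolate in your last paragraph is genuine, but it is an artifact of the paper's phrasing of $\PP$ via \emph{atoms of the lattice} rather than elements of $E$; for an arbitrary intersection-closed $\FF$ these two readings are not a priori equivalent. Since the lemma is being cited rather than proved, the intended reading is almost certainly the ground-set one from the references, and under that reading your worry evaporates. If one insists on the literal atom phrasing, note that your proposed induction does not close: you need an atom $a\leq\cl(X\cup x)$ with $a\not\leq X$, which presupposes atomicity, and even granting that, the recursion in the case $n=2$ only yields $X\lessdot X_1\lessdot\cl(X\cup x)$ with no contradiction---so the ``height goes up by exactly one'' statement is not established. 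The honest fix is simply to work with the ground-set formulation, which is what the cited sources actually prove.
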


We now come to our strengthening of Crapo's axioms. Under a mild additional assumption on the family $\FF$ we show that it is enough to check property $\PP$ on coatoms only. Call a collection $\FF\subseteq \TM(E)$ {\em intersection-generated} if all its elements are intersections of the maximal elements of $\FF \setminus E$.

\begin{thm}\label{thm:newcrapo}
Let $E$ be a finite set. An intersection-generated family $\FF$ of subsets of $E$ with $E\in\FF$, ordered by inclusion, is the set of flats of a matroid on the ground set $E$ if and only if $\PP(X,\FF)$ holds for all $X\in\FF$ with $\ell(\FF_{\geq X}) = 2$. 
\end{thm}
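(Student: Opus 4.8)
The plan is to reduce to Crapo's criterion (Lemma~\ref{thm:crapo}) by promoting the restricted hypothesis to its full strength. One direction is immediate: if $\FF$ is the set of flats of a matroid, then $\PP(X,\FF)$ holds for every $X\in\FF$ by Lemma~\ref{thm:crapo}, in particular whenever $\ell(\FF_{\geq X})=2$. All the content is in the converse, and by Lemma~\ref{thm:crapo} it suffices to show that if $\PP(X,\FF)$ holds for all $X$ with $\ell(\FF_{\geq X})=2$, then it holds for all $X\in\FF$. This is the flat--lattice counterpart of Theorem~\ref{main}, and I would prove it in the same spirit: by induction on $\ell(\FF_{\geq X})$, extracting a maximal chain of the upper interval $\FF_{\geq X}$.

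I would first recast $\PP(X,\FF)$ inside the lattice $\FF$, which is finite with $\uno=E$ and in which the infimum of $A,B$ is $A\cap B$. One checks that $\PP(X,\FF)$ is equivalent to the conjunction of \emph{(i)} $X\vee a\covers X$ for every atom $a\not\leq X$, and \emph{(ii)} $\AA_{X'}\supsetneq\AA_X$ for every cover $X'\covers X$. Indeed, pairwise disjointness of the candidate blocks $\AA_{X'}\setminus\AA_X$ is automatic: an atom $a\not\leq X$ lying below two distinct covers $X',X''$ of $X$ would lie in $X'\cap X''$, a flat with $X\subseteq X'\cap X''\subsetneq X'$, hence equal to $X$, so $a\leq X$ --- impossible. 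The statement that the blocks cover $\AA\setminus\AA_X$ says exactly that every atom $a\not\leq X$ lies below some cover of $X$, which, after replacing such a cover by $X\vee a$, is just (i); and non-emptiness of every block is (ii). The induction then has base cases $\ell(\FF_{\geq X})\leq 2$: length $2$ is the hypothesis, and the lengths $\leq 1$ are dispatched directly (when $X=E$ both conditions are vacuous; when $E$ is the only flat properly above $X$, condition (i) holds because $X\vee a=E$ for every atom $a\not\leq X$, and (ii) merely asks that $X$ omit an atom).

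For the inductive step, fix $X$ with $\ell(\FF_{\geq X})=n>2$ and assume (i)--(ii), hence $\PP$, at every flat strictly above $X$ (each of which has a strictly shorter upper interval). It remains to establish (i) and (ii) at $X$; I concentrate on (i), the point that carries the genuine difficulty. Suppose $a\not\leq X$ is an atom with $X\vee a$ \emph{not} covering $X$, and pick a flat $Z$ with $X<Z<X\vee a$. Then $a\not\leq Z$, whence $Z\vee a=X\vee a$, so applying (i) at $Z$ (legitimate since $\ell(\FF_{\geq Z})<n$) gives $X\vee a\covers Z$. Letting $Z$ range over all flats strictly between $X$ and $X\vee a$, we learn that the interval $[X,X\vee a]$ has height exactly $2$, that each of its middle elements covers $X$, and that $X\vee a\neq E$ (otherwise $[X,E]$ has height $2$, contradicting $n>2$). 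The remaining step --- and the one I expect to be the main obstacle --- is to turn this ``height two above $X$'' picture into a contradiction with the coline hypothesis, that is, to produce a flat $V\geq X$ with $\ell(\FF_{\geq V})=2$ at which (i) or (ii) fails. I would attempt this precisely as in Theorem~\ref{main}: take a maximal chain of $\FF_{\geq X}$ and transport the atom $a$ and the middle element of $[X,X\vee a]$ upward along it until a genuine coline is reached, and similarly propagate (ii). What makes this harder than Theorem~\ref{main} --- where one needed only the \emph{existence} of a circuit inside $Z$ avoiding a prescribed element --- is that the partition statement forces control over \emph{which} cover of $X$ each atom lies under, so condition (ii), the one place where the non-geometric lattice $\FF$ could misbehave, must be carried along the chain as well. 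As a fallback one could instead build the closure operator $A\mapsto\bigcap\{F\in\FF\mid A\subseteq F\}$, read off a candidate circuit family $\CC$, verify elimination for modular pairs of $\CC$ from the coline hypothesis, and invoke Corollary~\ref{M:thm}; but the translation between the two sets of hypotheses is itself not obvious and appears to cost about the same.
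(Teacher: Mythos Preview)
Your primary route---promoting $\PP$ from colines to all $X$ by induction on $\ell(\FF_{\geq X})$---is not what the paper does, and the point at which you stall is exactly the crux. Having reduced to the situation where $[X,X\vee a]$ has height~$2$, you still need to exhibit an actual coline $V\geq X$ at which $\PP$ fails, and the move you propose (``transport $a$ and the middle element upward along a maximal chain of $\FF_{\geq X}$'') does not obviously land on a violation of the hypothesis. The underlying obstruction is that your induction must simultaneously propagate (i) and (ii), and (ii)---that every cover of $X$ acquires a new atom---is not a condition that localizes near the top of $\FF$; in a lattice that is not coatomistic there may be elements not expressible as meets of coatoms, and your scheme gives no mechanism for climbing from such an $X$ to a genuine coline. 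So as written the argument has a real gap, one you yourself flag but do not close.

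The paper sidesteps this entirely by passing to the \emph{dual} picture and invoking Theorem~\ref{main} as a black box rather than re-running its proof inside $\FF$. It sets $\CC:=\{E\setminus F\mid F\ldot E\text{ in }\FF\}$, the complements of the coatoms, so that $U(\CC)$ is order-isomorphic to $\FF^{\op}$. A modular pair $A,B\in\CC$ then corresponds precisely to two coatoms of $\FF$ whose meet $F$ has $\ell(\FF_{\geq F})=2$, and a short complementation argument shows that $\PP(F,\FF)$ yields $\EE(A,B,\CC)$. Corollary~\ref{M:thm} now says $\CC$ is the circuit set of a matroid, and matroid duality identifies the members of $\FF$ (complements of unions of circuits) with the flats of the dual matroid. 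Your fallback---extracting circuits from the closure operator $A\mapsto\bigcap\{F\in\FF:A\subseteq F\}$---is the primal analogue of this, but there the dictionary between colines of $\FF$ and modular circuit pairs is far from transparent; choosing the \emph{dual} circuit family $\CC$ is exactly what makes that translation a two-line check.
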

\begin{proof} If $\FF$ is the lattice of flats of a matroid then, by Lemma
  \ref{thm:crapo},  $\PP(X,\FF)$ holds for all $X\in \FF$. 

To prove the other direction, let $\FF$ be an intersection-generated collection of subsets of $E$ such that $E\in \FF$. Then, with respect to the partial order given by inclusion, $\FF^{\op}$ is an atomic lattice. Its set of atoms is 
$$
 \CC:=\{E\setminus F \mid F\subseteq \FF,\, F\ldot E \}
$$
which is a collection of incomparable subsets of $E$ such that $U(\CC)=\FF^{{\op}}$.
We collect the following two facts, which are easily checked by complementation.
\begin{itemize}
\item[(1)] $A,B\in\CC$ are a modular pair if and only if $E\setminus A
  \covers F$ and $E\setminus B\covers F$ for some $F$ with
  $\ell(\FF_{\geq F})=2$. 
\item[(2)] For a modular pair $A,B$ in $\CC$, 
  $\PP((E\setminus A) \wedge (E\setminus B) , \FF)$ implies $\EE(A,B,\CC)$.  
\end{itemize}

By (2), $\EE(A,B,\CC)$ holds for a given modular pair $A,B$ of
$\CC$ if $\PP(X,\FF)$ holds for some $X$ that, by (1), has
$\ell(\FF_{\geq X})=2$. Thus, if $\PP(X,\FF)$ holds for all $X\in\FF$
with $\ell(\FF_{\geq X})=2$, $\CC$ satisfies the hypotheses of
Corollary \ref{M:thm} and so it is the set of circuits of a
matroid. The members of $\FF$ are complements of members of $U(\CC)$
and so, e.g. by \cite[p.\ 78]{Oxley}, they are flats of another
matroid (dual to the former). In particular, $\FF$ is the lattice of flats of some matroid.
\end{proof}

\begin{df} A finite atomic lattice $\LL$ is {\em geometric} if it is the lattice of flats of a matroid.
\end{df}
As the title of the paper \cite{Crapo} itself says, Crapo was interested in what we called property $\PP$ as a means for characterizing the structure of {geometric lattices}. We thus close in the same spirit by stating a corollary of Theorem \ref{thm:newcrapo}, which strengthens \cite[Proposition 4]{Crapo} in the finite case.

\begin{cor}\label{cor_lattice} A finite lattice $\LL$ is {geometric} if and only if $\PP(x,\LL)$ holds for all $x\in \LL$ with $\ell(\LL_{\geq x})=2$.
\end{cor}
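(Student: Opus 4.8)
The plan is to obtain Corollary \ref{cor_lattice} from Theorem \ref{thm:newcrapo} by presenting the abstract lattice $\LL$ concretely as an intersection-closed family of sets. Observe first that the property $\PP(x,P)$, the set of atoms, the covering relation, and the length function $\ell$ are all invariants of the isomorphism type of the poset $P$; since ``geometric'' means precisely ``isomorphic to the lattice of flats of a matroid'', it therefore suffices to produce, for any finite lattice $\LL$, a finite set $E$ together with an intersection-closed family $\FF\subseteq\TM(E)$ with $E\in\FF$ such that $(\FF,\subseteq)$ is isomorphic to $\LL$ as a poset.

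To this end I would set $E:=\LL\setminus\{\zero\}$ and define $\varphi\colon\LL\to\TM(E)$ by $\varphi(x):=\LL_{\leq x}\setminus\{\zero\}$. The map $\varphi$ is visibly order-preserving; it is order-reflecting and injective because for $x\neq\zero$ one has $x\in\varphi(x)$, so $\varphi(x)\subseteq\varphi(y)$ forces $x\leq y$, and then $\varphi(x)=\varphi(y)$ forces $x=y$. Hence $\varphi$ is a poset isomorphism onto its image $\FF:=\varphi(\LL)$. Since $\LL$ is a lattice, $\LL_{\leq x}\cap\LL_{\leq y}=\LL_{\leq x\wedge y}$, whence $\varphi(x)\cap\varphi(y)=\varphi(x\wedge y)\in\FF$, so $\FF$ is intersection-closed; moreover $\varphi(\uno)=E$, so $E\in\FF$. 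Thus $\FF$ satisfies the standing hypotheses of Theorem \ref{thm:newcrapo}.

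It then remains to transport the statement of Theorem \ref{thm:newcrapo} along $\varphi$. The isomorphism sends atoms to atoms, sends $\AA_x$ to $\AA_{\varphi(x)}$, sends the covers of $x$ to the covers of $\varphi(x)$, and satisfies $\ell(\LL_{\geq x})=\ell(\FF_{\geq\varphi(x)})$; consequently a bijection of atoms identifies $\{\AA_{x'}\setminus\AA_x\}_{x'\covers x}$ with $\{\AA_{\varphi(x')}\setminus\AA_{\varphi(x)}\}_{\varphi(x')\covers\varphi(x)}$, so $\PP(x,\LL)$ holds if and only if $\PP(\varphi(x),\FF)$ holds, for every $x\in\LL$. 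Therefore $\PP(x,\LL)$ holds for all $x\in\LL$ with $\ell(\LL_{\geq x})=2$ if and only if $\PP(X,\FF)$ holds for all $X\in\FF$ with $\ell(\FF_{\geq X})=2$; by Theorem \ref{thm:newcrapo} the latter is equivalent to $\FF$ being the set of flats of a matroid on $E$, hence to $\FF$ being a geometric lattice, hence (via $\varphi$) to $\LL$ being geometric. The converse implication is immediate: if $\LL$ is geometric it is isomorphic to the lattice of flats of a matroid, and then Lemma \ref{thm:crapo} (equivalently, the ``only if'' direction of Theorem \ref{thm:newcrapo}) gives $\PP$ at every element, in particular at those with $\ell(\LL_{\geq x})=2$.

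I do not expect a genuine obstacle here; the argument is essentially a change of vocabulary. The only point needing a moment's care is that Theorem \ref{thm:newcrapo} is phrased for concrete set systems rather than for abstract lattices, so the representation $\LL\cong\FF$ is genuinely required; and one should confirm that $\PP$ --- although its statement invokes the ambient atom set --- is preserved by the poset isomorphism, which it is because atoms, covers and lengths are intrinsic to the poset structure. (If a smaller representing family is preferred, one may replace $\LL\setminus\{\zero\}$ by the set of join-irreducibles of $\LL$, with the same verifications, but this refinement is not needed.)
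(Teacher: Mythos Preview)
Your argument is correct and follows the same strategy as the paper: represent the abstract lattice $\LL$ as an intersection-closed family of subsets of a finite set and invoke Theorem~\ref{thm:newcrapo}, transporting $\PP$ along the isomorphism. The only difference is the choice of representation: the paper takes $E=\AA$ and $\FF=\{\AA_x\}_{x\in\LL}$, whereas you take $E=\LL\setminus\{\zero\}$ and $\FF=\{\LL_{\leq x}\setminus\{\zero\}\}_{x\in\LL}$. Your choice is in fact the more careful one, since the map $x\mapsto\AA_x$ is an order isomorphism only when $\LL$ is atomic---a property not assumed in the hypothesis---while your principal-ideal representation is always injective and order-reflecting.
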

\begin{proof}
Every finite atomic lattice $\LL$ is order-isomorphic to the poset obtained by ordering the family of sets $\FF:=\{\AA_x\}_{x\in\LL}$ by inclusion. Moreover, $\PP(\AA_x,\FF)$ if and only if $\PP(x,\LL)$ for all $x\in \LL$. Now Theorem \ref{thm:newcrapo} shows that $\FF$ is the lattice of flats of a matroid, hence geometric, and thus so is $\LL$ as well. 
\end{proof}

\begin{qu}
In the previous section we gave a corollary of Theorem \ref{main} that
stated our result for infinite matroids in terms of one possible
approach via ``circuits''. Since duality is precisely one of the
features that independence spaces lack \cite[Theorem
3.1.13]{InfOxley}, it is not possible to use the argument of Theorem
\ref{thm:newcrapo} in the infinite case. 
However, since infinite geometric lattices are well-studied objects
that deserve interest in their own right (see for example \cite{MaMa}), we ask the following question: {\em Is there a
  lattice-theoretic (``dual'') version of Theorem \ref{main} that
  would lead to an analogue of Corollary \ref{cor_lattice} for
  infinite lattices?}
\end{qu}

\bibliographystyle{abbrv}

\bibliography{MEbib}

\end{document}